 \newtheorem{thm}{Theorem}[section]
 \newtheorem{lem}[thm]{Lemma}
 \theoremstyle{definition}
 \theoremstyle{remark}
 \newtheorem{rem}[thm]{Remark}
 \numberwithin{equation}{section}
\numberwithin{equation}{section}
\newenvironment{altproof}[1]
{\noindent
{\em Proof of {#1}}.}
{\nopagebreak\mbox{}\hfill $\Box$\par\addvspace{0.5cm}}
\newcommand\mytop[2]{\genfrac{}{}{0pt}{}{#1}{#2}}
\newcommand\set[1]{\left\{\,#1\,\right\}}  
\newcommand\abs[1]{\left|#1\right|} 
\newcommand\norm[1]{\left\Vert#1\right\Vert} 
\newcommand\Sym{\text {Sym}}
\newcommand{\veps}{\varepsilon}
\newcommand{\om}{\omega}
\newcommand{\si}{\sigma}
\newcommand{\Ga}{\Gamma}
\newcommand{\De}{\Delta}
\newcommand{\La}{\Lambda}
\newcommand{\Om}{\Omega}
\newcommand{\Si}{\Sigma}
\newcommand{\R}{\mathbb{R}}
\newcommand{\Z}{\mathbb{Z}}
\newcommand{\cC}{\mathcal{C}}
\newcommand{\cF}{\mathcal{F}}
\newcommand{\cM}{\mathcal{M}}
\newcommand{\cO}{\mathcal{O}}
\newcommand{\cU}{\mathcal{U}}
\newcommand{\fJ}{{\mathfrak J}}
\newcommand{\conf}{\cF_N(\Om)}
\newcommand{\pa}{\partial}
\def\id{\mathrm{id}}
\newcommand{\beq[1]}{\begin{equation}\label{eq:#1}}
\newcommand{\eeq}{\end{equation}}
\newcommand{\beql}{\begin{equation}\label}
\newcommand{\beqq}{\begin{equation*}}
\newcommand{\eeqq}{\end{equation*}}
\newcommand{\bal}{\begin{aligned}}
\newcommand{\eal}{\end{aligned}}
\newcommand{\bdpm}{\begin{displaymath}}
\newcommand{\edpm}{\end{displaymath}}
\begin{document}

%
%
%
%
%
%
%
%
%

\title[Periodic solutions of Hamiltonian systems of $N$-vortex type]
 {Periodic solutions of singular first-order \\Hamiltonian systems of N-vortex type}

\author[Thomas Bartsch]{Thomas Bartsch}

\address{%
Mathematisches Institut\\
Universit\"at Giessen\\
Arndtstr.\ 2\\
35392 Giessen\\
Germany}

\email{Thomas.Bartsch@math.uni-giessen.de}

\subjclass{Primary 37J45; Secondary 37N10, 76B47}

\keywords{$N$-vortex dynamics; singular first order Hamiltonian systems; periodic solutions}


\begin{abstract}
We are concerned with the dynamics of $N$ point vortices $z_1,\dots,z_N\in\Om\subset\R^2$ in a planar domain. This is described by a Hamiltonian system
\[
\Ga_k\dot{z}_k(t)=J\nabla_{z_k} H\big(z(t)\big),\quad k=1,\dots,N,
\]
where $\Ga_1,\dots,\Ga_N\in\R\setminus\{0\}$ are the vorticities, $J\in\R^{2\times2}$ is the standard symplectic $2\times2$ matrix, and the Hamiltonian $H$ is of $N$-vortex type:
\[
H(z_1,\dots,z_N)
 = -\frac1{2\pi}\sum_{\mytop{j,k=1}{j\ne k}}^N \Ga_j\Ga_k\log\abs{z_j-z_k} - \sum_{j,k=1}^N\Ga_j\Ga_kg(z_j,z_k).
\]
Here $g:\Om\times\Om\to\R$ is an arbitrary symmetric function of class $\cC^2$, e.g.\ the regular part of a hydrodynamic Green function.
Given a nondegenerate critical point $a_0\in\Om$ of $h(z)=g(z,z)$ and a nondegenerate relative equilibrium $Z(t)\in\R^{2N}$ of the Hamiltonian system in the plane with $g=0$, we prove the existence of a smooth path of periodic solutions $z^{(r)}(t)=\big(z^{(r)}_1(t),\dots,z^{(r)}_N(t)\big)\in\Om^N$, $0<r<r_0$, with $z^{(r)}_k(t)\to a_0$ as $r\to0$. In the limit $r\to0$, and after a suitable rescaling, the solutions look like $Z(t)$.
\end{abstract}

\maketitle
\section{Introduction}
Let $\Om\subset\R^2$ be a smooth domain, and let $g:\Om\times\Om\to\R$ be symmetric and of class $\cC^2$. We are interested in the existence of periodic solutions $z(t)=(z_1(t),\dots,z_N(t))\in\Om^N$ of first order Hamiltonian systems of the form
\[
\Ga_k\dot{z}_k(t) = J\nabla_{z_k} H\big(z(t)\big),\quad k=1,\dots,N. \tag{$HS$}
\]
Here $J=\begin{pmatrix}0&1\\-1 & 0\end{pmatrix}$ is the standard symplectic matrix in $\R^2$. The Hamilton function $H$ is of the form
$H(z) = H_0(z) - F(z)$ with
\[
H_0(z) = -\frac1{2\pi}\sum_{{j,k=1}{j\ne k}}^N \Ga_j\Ga_k\log|z_j-z_k|
\]
the Kirchhoff-Onsager functional, and
\[
F(z) = \sum_{j,k=1}^N \Ga_j\Ga_k g(z_j,z_k).
\]
Here $\Ga_1,\dots,\Ga_N\in\R\setminus\{0\}$ are the vorticities. Thus setting
\[
G(w,z)=-\frac1{2\pi}\log|w-z|-g(w,z)
\]
and $h(z):=g(z,z)$ the Hamiltonian can also be written as
\[
H(z) = \sum_{\mytop{j,k=1}{j\ne k}}^N \Ga_j\Ga_k G(z_j,z_k) - \sum_{k=1}^N\Ga_k^2h(z_k).
\]
It is defined on the configuration space
\[
\conf=\{z\in\Om^N: z_j\ne z_k \text{ for }j\ne k\}.
\]
of $N$ different points in $\Om$. Obviously $H (z_1,\dots,z_N)$ becomes singular if $|z_j-z_k|\to0$ for some $j\ne k$ or if $|h(z_k)|\to\infty$ for some $k$. In the classical $N$-vortex problem $G$ is a hydrodynamic Green's function (see \cite{Flucher-Gustafsson:1997}) with regular part $g$ and $h$ is the Robin function. Then we have $|h(z_k)|\to\infty$ as $z_k\to\pa\Om$.

Hamiltonian systems like ($HS$) appear as singular limit equations in a variety of problems from mathematical physics. In fluid dynamics ($HS$) is derived from the Euler equation in vorticity form when one makes a point vortex ansatz. In this setting $H$ is the Kirchhoff-Routh path function. Other applications of ($HS$) are models for superconductivity where $H$ appears as renormalized energy for Ginzburg-Landau vortices, or the dynamics of a magnetic vortex system in a thin ferromagnetic film modeled by the Landau-Lifshitz-Gilbert equation. We refer the reader to \cite{Majda-Bertozzi:2001,Marchioro-Pulvirenti:1994,Saffmann:1995} for fluid dynamics, \cite{Bethuel-etal:1994, Colliander-Jerrard:1998,Lin-Xin:1998}) for the GLS equation, and to \cite{Kurzke-etal:2011} for the LLG equation. An introduction to point vortex dynamics is \cite{Newton:2001}.

There is a lot of work about ($HS$) for special domains where the Green's function is explicitly known. Most papers actually deal with the case when $\Om=\R^2$ is the plane so that boundary terms do not appear, i.e.\ $H=H_0$. In particular, periodic solutions of
\[
\Ga_k\dot{z}_k(t) = J\nabla_{z_k} H_0\big(z(t)\big),\quad k=1,\dots,N, \tag{$HS_0$}
\]
in the plane have been investigated by many mathematicians, see e.g.\ \cite{Aref-etal:2003} and the references therein. For a general domain even the existence of equilibria is difficult to prove; see \cite{Bartsch-Pistoia:2015,Bartsch-Pistoia-Weth:2010,Kuhl:2015,Kuhl:2016} for recent results in this direction. Concerning periodic solutions of ($HS$) in a general domain the only papers we are aware of are \cite{Bartsch-Dai:2016,Bartsch-Gebhard:2016}. In fact, although there is a lot of work on singular second order Hamiltonian systems motivated by the $N$-body problem, there are very few papers on singular first order Hamiltonian systems. Earlier papers, e.g.\ \cite{Carminati-Sere-Tanaka:2006,Tanaka:1996}, deal with first order Hamiltonians that are modeled after the $N$-body Hamiltonian and do not apply to vortex type Hamiltonian systems. In the present paper we shall give a new direct proof of a recent result from \cite{Bartsch-Gebhard:2016} that has been derived in \cite{Bartsch-Gebhard:2016} in an indirect way using a special equivariant degree theory. Our new proof is constructive and based on the contracting mapping principle, hence it may be implemented to compute the solutions numerically. This direct approach could also be useful for further investigations of the dynamics near the family of periodic solutions that we obtain.

\section{Main theorem}\label{sec:theorem}

In order to state our result we need to introduce some notation. A periodic relative equilibrium $Z(t)\in\cF_N(\R^2)$ with center of vorticity at $0$ is a solution of ($HS_0$) of the form
\beq[rel-equilib]
Z(t)=e^{-\om Jt}z,\quad \om\in\R\setminus\{0\},\ z\in\cF_N(\R^2),
\eeq
where $e^{-\om Jt}\in\R^{2\times2}$ acts on $z=(z_1,\dots,z_N)\in\cF_N(\R^2)\subset(\R^2)^N$ by multiplication on each component. Such a relative equilibrium $Z$ is non-degenerate provided the linearized system
\beq[R2lin]
M_\Gamma\dot{w} = J_N\big(H_0''(Z(t))\big)w
\eeq
has exactly three linearly independent $\frac{2\pi}{\abs{\om}}$-periodic solutions. This is the minimal possible dimension because $H_0$ is invariant under translations and rotations. Clearly $Z$ as in \eqref{eq:rel-equilib} is a non-degenerate $\frac{2\pi}{\abs{\om}}$-periodic equilibrium iff $Z_\om(t):=\sqrt{|\om|}Z(t/|\om|)$ is a non-degenerate $2\pi$-periodic equilibrium. We can therefore assume that $Z$ is $2\pi$-periodic, i.e.\ $\abs{\om}=1$. We shall work on the space
\[
H^1_{2\pi}:=\set{z:\R\to\R^{2N}\mid z(t+2\pi)=z(t) \forall t,\ z\in H^1_{loc}}
\]
of absolutely continuous $2\pi$-periodic functions $z:\R\to\R^{2N}$. The group $S^1=\R/2\pi\Z$ acts on $H^1_{2\pi}$ as follows:
\[
\theta*z(t) := z(t+\theta)\quad\text{ for }\theta\in S^1,\ z\in H^1_{2\pi}.
\]

\begin{thm}\label{thm:existence}
Let $Z$ be a nondegenerate periodic relative equilibrium of ($HS_0$) as in \eqref{eq:rel-equilib}, and let $a_0\in\Om$ be a nondegenerate critical point of $h:\Om\to\R$. If $\Ga:=\Ga_1+\ldots+\Ga_N \ne 0$ then the following holds.
\begin{enumerate}
\item[\rm a)] There exists $r_0>0$ and a $C^1$-map
    \[
    (0,r_0)\to H^1_{2\pi},\quad r\mapsto u^{(r)},
    \]
    with $u^{(r)}\to Z$ as $r\to0$, and such that $z^{(r)}(t):=a_0+ ru ^{(r)}(t/r^2)$ is a periodic solution of $(HS)$ for $0<r<r_0$.
\item[\rm b)] There exists an $S^1$-invariant neighborhood $\cU \subset H^1_{2\pi}$ of $S^1*Z$ such that if $u\in\cU$ generates a solution $z(t)=a_0+ru(t/r^2)$ of $(HS)$ for some $0<r<r_0$, then $u=\theta*u^{(r)}$ for some $\theta\in S^1$.
\end{enumerate}
\end{thm}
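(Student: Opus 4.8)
The plan is to rescale $(HS)$ so that it becomes an $O(r^2)$-perturbation of the planar system $(HS_0)$, and then to continue the relative equilibrium $Z$ by a Lyapunov--Schmidt reduction in which the infinite-dimensional part is solved by the contraction mapping principle (keeping the construction constructive) while the remaining two-dimensional bifurcation equation is governed by $h''(a_0)$.

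First I would substitute $z_k(t)=a_0+ru_k(t/r^2)$ and pass to the fast time $s=t/r^2$, writing $\dot u=du/ds$. The constants $\log r$ drop out of $\nabla_{z_k}H_0$, and since $\nabla_1 g(a_0,a_0)=\frac12\nabla h(a_0)=0$ the regular part contributes only at order $r$; together with the Jacobian factor this yields
\[
\Ga_k\dot u_k = J\nabla_{u_k}H_0(u) - r^2 G_k(u,r),
\]
where $G$ is smooth near $(Z,0)$ and $G_k(u,0)=2\Ga_k J\big(\Ga A u_k+B\textstyle\sum_j\Ga_j u_j\big)$ with $A=D_{11}g(a_0,a_0)$, $B=D_{12}g(a_0,a_0)$ and $A+B=\frac12 h''(a_0)$. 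Setting $\Phi(u,r)=M_\Ga\dot u-J_N\nabla H_0(u)+r^2G(u,r)$, with $M_\Ga=\mathrm{diag}(\Ga_kI_2)$ and $J_N=\mathrm{diag}(J,\dots,J)$, defines a smooth, $S^1$-equivariant map on a neighborhood $\cO\subset H^1_{2\pi}$ of the orbit $S^1*Z$; smoothness holds because the components of $Z(t)=e^{-\om Jt}z$ stay uniformly separated. At $r=0$ this is exactly $(HS_0)$, so $\Phi(Z,0)=0$.

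Next I would analyse $L:=D_u\Phi(Z,0)$, $Lw=M_\Ga\dot w-J_N H_0''(Z)w$, a compact perturbation of $M_\Ga\frac{d}{ds}$ and hence Fredholm of index zero. By the nondegeneracy hypothesis \eqref{eq:R2lin}, $\ker L$ is three-dimensional, spanned by $\dot Z$ (time translation, which coincides with the rotation generator on a relative equilibrium) and the two constant translations $T_i=(e_i,\dots,e_i)$. Using $J_N^2=-I$ and $M_\Ga J_N=J_N M_\Ga$ one checks $\ker L^*=J_N\ker L$, so $\Range L=(J_N\ker L)^\perp$ in $L^2_{2\pi}$, where $J_N\ker L=\mathrm{span}\{Z,J_NT_1,J_NT_2\}$ and $J_N\dot Z=\om Z$. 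Writing $u=Z+\xi+\eta$ with $\xi\in\ker L$, $\eta\perp\ker L$, and letting $P,Q$ be the orthogonal projections onto $\Range L$ and $J_N\ker L$, the range equation $P\Phi(u,r)=0$ is solved for $\eta=\eta(\xi,r)$ by contraction, since $PL|_{(\ker L)^\perp}$ is an isomorphism; one obtains $\eta(\xi,0)=0$ and $\partial_r\eta(\xi,0)=0$. This leaves the finite-dimensional bifurcation equation $\beta(\xi,r):=Q\Phi(Z+\xi+\eta(\xi,r),r)=0$.

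The main obstacle, and the heart of the proof, is that the bifurcation equation degenerates at $r=0$: every translate $Z+\tau$, $\tau=(\tau_c,\dots,\tau_c)$, solves $(HS_0)$ and $H_0''(Z+\tau)=H_0''(Z)$, so $\beta(\tau,0)\equiv0$ and $\partial_r\beta(\tau,0)=0$; hence $\beta(\tau,r)=r^2\tilde\beta(\tau,r)$ with $\tilde\beta(\tau,0)=QG(Z+\tau,0)$, and the correct power of $r$ must be divided out. Pairing $G(Z+\tau,0)$ against the cokernel basis and using $\int_0^{2\pi}Z\,ds=0$ together with $\sum_j\Ga_j z_j=0$, the component along $J_N\dot Z=\om Z$ vanishes identically --- this is the phase direction, automatically unobstructed by $S^1$-equivariance and energy conservation --- while the two translation components equal $2\pi\Ga^2\ska{h''(a_0)\tau_c,e_i}$. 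Thus the reduced equation is $\Ga^2 h''(a_0)\tau_c=0$, whose derivative at $\tau_c=0$ is invertible precisely because $a_0$ is a nondegenerate critical point of $h$ and $\Ga\ne0$ (the factor $\Ga^2$ is exactly where the hypothesis $\Ga\ne0$ enters). The implicit function theorem then yields a unique small $\tau(r)$, of class $C^1$ in $r$ with $\tau(0)=0$, so that $u^{(r)}:=Z+\tau(r)+\eta(\tau(r),r)\to Z$ solves $\Phi(\cdot,r)=0$; this proves a). Part b) follows from the uniqueness in the reduction together with $S^1$-equivariance: fixing the $\dot Z$-phase by a slice transverse to $S^1*Z$, every solution in a small $S^1$-invariant tube $\cU$ is $\theta*u^{(r)}$ for some $\theta\in S^1$.
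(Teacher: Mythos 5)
Your proposal is correct in substance, and its key computations check out: the $O(r^2)$ form of the rescaled perturbation with $G_k(u,0)=2\Ga_kJ\big(\Ga Au_k+B\sum_j\Ga_ju_j\big)$, the identity $\ker L^*=J_N\ker L$, the vanishing of the $Z$-component of $QG(Z+\tau,0)$, and the translation components $2\pi\Ga^2\ska{h''(a_0)\tau_c,e_i}$ (which is exactly where $\Ga\neq0$ and the nondegeneracy of $h''(a_0)$ enter). But your route is genuinely different from the paper's. You run the classical Lyapunov--Schmidt scheme over the full three-dimensional kernel: the range equation is solved by a contraction whose linearization is uniformly invertible on $(\ker L)^\perp$, and all the degeneracy is pushed into the finite-dimensional bifurcation map, which you divide by $r^2$ and solve by the implicit function theorem. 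The paper (Lemma~\ref{lem:reduction}) instead reduces only the one-dimensional phase direction: it works on $X=\dot Z^\perp=N_Z\oplus D$, where the linearization $L_r$ degenerates as $r\to0$ (its $D$-block is of order $r^2$), inverts $L_r$ blockwise so that $L_r^{-1}$ blows up like $r^{-2}$ on $D$, and wins the contraction anyway because the data beat the blow-up: $P_D\Phi_r(0)=O(r^3)$ --- your cancellation $\sum_j\Ga_jz_j=0$, $\int_0^{2\pi}Z=0$ in disguise --- and $D\varphi_r(v)=O(r^3)O(\norm{v})$. Your version keeps the linear analysis standard and displays the hypotheses transparently (the reduced equation is literally $\Ga^2h''(a_0)\tau_c=0$; the paper's analogue is the invertibility of $D_0$); the paper's version extracts existence, the $O(r)$ estimate, and the uniqueness in b) from a single fixed-point iteration, which is what makes it directly implementable numerically. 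One harmless imprecision: $\eta(\xi,0)=0$ holds for translations $\xi=\tau$ (then $Z+\tau$ solves $(HS_0)$ exactly), not for general $\xi\in\ker L$; since you only evaluate the bifurcation map at $\xi=\tau$, nothing is lost.

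The one step you must make precise is the assertion that the phase direction is ``automatically unobstructed'': your computation shows the $Z$-component vanishes at order $r^2$, but the argument needs it to vanish exactly whenever the two translation components do. The energy argument you allude to does deliver this, and in your $L^2$ formulation it even avoids the differentiability problem of the $S^1$-action on $H^1_{2\pi}$ that forces the paper to prove Lemma~\ref{lem:equivariance}: if $u$ solves the range equation and the two translation equations, then $\Phi(u,r)\in\mathrm{span}\{Z\}$, i.e.\ $M_\Ga\dot u=J_N\nabla H_r(u)+\la Z$ a.e.\ for some $\la\in\R$; since $M_\Ga^{-1}J_N$ is skew-symmetric, $\frac{d}{ds}H_r(u(s))=\la\ska{\nabla H_r(u),M_\Ga^{-1}Z}$, and integrating over a period gives $\la\int_0^{2\pi}\ska{\nabla H_r(u),M_\Ga^{-1}Z}\,ds=0$; for $u$ near $Z$ and $r$ small this integral is close to $\int_0^{2\pi}\ska{\nabla H_0(Z),M_\Ga^{-1}Z}\,ds=-2\pi\om\abs{z}^2\neq0$, whence $\la=0$. (The paper's substitute is to differentiate the invariance identity $\fJ_r(\theta*Z+\theta*v^{(r)})=\fJ_r(Z+v^{(r)})$, which is exactly why it needs the $C^1$-dependence on $\theta$ established in Lemma~\ref{lem:equivariance}.) With this inserted, and with the standard slice argument for b) --- time-shift any solution in a small invariant tube so that its difference from $Z$ is orthogonal to $\ker L\ominus\mathrm{span}\{T_1,T_2\}$, then invoke uniqueness of the contraction fixed point and of the IFT solution --- your proof is complete.
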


\begin{rem}\label{rem:main1}\rm
  a) Suppose $g$ is the regular part of the Dirichlet Green function in $\Om$ so that $h(z)\to\infty$ as $z\to\pa\Om$. In a bounded convex domain the Robin function $h$ is strictly convex and the (unique) minimum is nondegenerate; see \cite[Theorem 3.1]{Caffarelli-Friedman:1985}. There are bounded domains with an arbitrarily large number of critical points of $h$; these may even be simply connected, e.g.\ dumbbell shaped. Moreover, for a generic bounded domain all critical points of $h$ are non-degenerate; see \cite{Micheletti-Pistoia:2014}.

  b) A simple example of a nondegenerate relative equilibrium in the plane consists of two vortices $z_1,z_2$ with vorticities $\Ga_1,\Ga_2\ne0$ and such that $\Ga_1+\Ga_2\ne 0$. These rotate rigidly around their center of vorticity $\frac{\Ga_1z_1+\Ga_2z_2}{\Ga_1+\Ga_2}$. Explicitly one may choose $z_1,z_2\in\R^2\setminus\{0\}$ such that $\Ga_1z_1+\Ga_2z_2=0$. Then $Z(t):=\big(e^{-\om Jt}z_1,e^{-\om Jt}z_2\big)$ with $\om=\frac{\Ga_1+\Ga_2}{\pi|z_1-z_2|^2}$ solves ($HS_0$). Such an equilibrium is always non degenerate.

  Three vortices with vorticities $\Ga_1,\Ga_2,\Ga_3\ne 0$ placed on the edges of an equilateral triangle also form a relative equilibrium. Explicitly, choose $z_1,z_2,z_3\in\R^2\setminus\{0\}$ such that $|z_1-z_2|=|z_1-z_3|=|z_2-z_3|=:s$ and $\Ga_1z_1+\Ga_2z_2+\Ga_3z_3=0$. Then $Z_k(t):=e^{-\om Jt}z_k$, $k=1,2,3$, with $\om=\frac{\Ga_1+\Ga_2}{\pi s^2}$ solves ($HS_0$). It is non-degenerate provided the total vortex angular momentum $L=\Ga_1\Ga_2+\Ga_1\Ga_3+\Ga_2\Ga_3$ and the total vorticity $\Ga=\Ga_1+\Ga_2+\Ga_3$ satisfy
  \[
  \Ga\neq 0,\quad L\neq 0\quad \text{and}\quad L\neq \Ga_1^2+\Ga_2^2+\Ga_3^2.
  \]
  These statements are well known and can be found in \cite{Roberts:2013}.

  c) As in \cite{Bartsch-Gebhard:2016} one can refine Theorem \ref{thm:existence} by including symmetries provided the subgroup
  \[
  \Sym(\Ga):=\set{\si\in\Si_N:(\Ga_{\si(1)},\ldots,\Ga_{\si(N)})=(\Ga_1,\dots,\Ga_N)}
  \]
  of the group $\Si_N$ of permutations of $1,\dots,N$ is non-trivial. Given $\si\in\Sym(\Ga)$ with ${\text{ord}}(\si)=k$ we look for solutions of ($HS$) with the following spatio-temporal symmetry:
  \[
  \big(u_1(t),\dots,u_N(t)\big) = \big(u_{\si^{-1}(1)}(t+\frac{2\pi}{k}),\ldots,u_{\si^{-1}(N)}(t+\frac{2\pi}{k})\big) =: \si*u(t).
  \]
  The space of such function is denoted by $X^\si:= \set{u\in H^1_{2\pi}:\si*u=u}$. It is a closed subspace of $H^1_{2\pi}$. If $Z\in X^\si$ is nondegenerate within this space, i.e.\ \eqref{eq:R2lin} has exactly three linearly independent $\frac{2\pi}{\abs{\om}}$-periodic solutions in $X^\si$, then one can work in $X^\si$ instead of $H^1_{2\pi}$. Since this extension is straightforward we leave it to the reader. Observe that it allows to treat the case where $Z$ is Thomson's solution of $N$ identical vortices (i.e.\ $\Sym(\Ga)=\Si_N$) placed on the vertices of a regular $N$-gon.

  d) Using a special degree it was proved in \cite{Bartsch-Gebhard:2016} that there exists a global continuum of periodic solutions of $(HS)$ extending the family $z^{(r)}$, $0<r<r_0$.
\end{rem}

\section{Proof of Theorem \ref{thm:existence}}\label{sec:proof}

We assume without loss of generality that $a_0=0$. First we introduce a parameter $r>0$ in order to normalize the period to $2\pi$ and in order to obtain ($HS_0$) as a limit of a corresponding family of systems ($HS_r$). More precisely, we consider the Hamiltonian
\[
H_r(u) := H_0(u)-F(ru)+F(0) = H(ru)+\frac{1}{2\pi}\sum_{j\neq k}\Ga_j\Ga_k\log r+F(0).
\]
Then $z$ solves $(HS)$ iff $u(s)=\frac1r z(r^2s)$ solves
\[
\quad \Ga_k\dot{u}_k = J\nabla_{u_k} H_r(u),\quad k=1,\dots,N. \tag{$HS_r$}
\]
For $r\ge0$ the Hamiltonian $H_r$ is defined on
\[
\cO_r=\{u\in\R^{2N}: r u_k\in\Om, u_j\neq u_k\text{ for all } j\neq k\}.
\]
Observe that $H_r(u)\to H_0(u)$ as $r\to 0$, moreover $\cO_r=\cF_N(\frac1r\Om)$ for $r>0$ and $\cO_0=\cF_N(\R^2)$. Thus this blow-up procedure produces a continuous family of Hamiltonians depending on $r\ge0$, and we may think of ($HS_0$) as a limit of ($HS_r$).

We shall seek $2\pi$-periodic solutions $u\in H^1_{2\pi}$ of $(HS_r)$, corresponding to $2\pi r^2$-periodic solutions $z$ of $(HS)$. Setting
\[
M_\Ga =
\begin{pmatrix}
\Ga_1&&\\
&\ddots&\\
&& \Ga_N
\end{pmatrix}\otimes E_2
=\begin{pmatrix}
\Ga_1E_2&&\\
&\ddots&\\
&& \Ga_NE_2
\end{pmatrix}
\in\R^{2N\times 2N}
\]
and
$J_N = E_N\otimes J\in \R^{2N\times 2N}$, where $E_m\in\R^{m\times m}$ is the identity matrix, the functional associated to $(HS_r)$ is
\[
\fJ_r(u) = \frac12 \int^{2\pi}_0 M_\Ga\dot{u}\cdot J_Nu - \int^{2\pi}_0 H_r(u)
 = \fJ_0(u)-\int^{2\pi}_0 F(ru)+2\pi F(0).
\]
This is defined on
$$
\La_r:=\{u\in H^1_{2\pi}:u(t)\in\cO_r\text{ for all }t\in\R\},
$$
which is an open subset of $H^1_{2\pi}$. The solution $Z\in H^1_{2\pi}$ of $(HS_0)$ generates a $3$-dimensional manifold $\cM=\cM_Z$ of solutions obtained from $Z$ by time shifts $\theta*Z$, $\theta\in S^1$, and translations $(a+Z_1, \ldots, a+Z_N)$, $a\in\R^2$. Writing $\hat{a}=(a,\ldots,a)\in\R^{2N}$ for $a\in\R^2$ we have
\[
\cM=\{\hat{a}+\theta*Z:a\in\R^2,\ \theta\in S^1\}\subset H^1_{2\pi}.
\]
Setting $D:=\{\hat{a}:a\in\R^2\}\subset\R^{2N}\subset H^1_{2\pi}$ the tangent space of $\cM$ at $Z$ is $T_Z=D\oplus\R\dot{Z}$. The assumption that $Z$ is a nondegenerate solution of $(HS_0)$ means that $\cM$ is a nondegenerate critical manifold of $\fJ_0$, i.e.\ $\fJ''_0(Z):H^1_{2\pi}\to H^1_{2\pi}$ has kernel $T_Z$ and image $N_Z=(T_Z)^\perp$, hence it induces an isomorphism $N_Z \to N_Z$.

We define $\Phi_r(v):=\nabla\fJ_r(Z+v)$ for $v\in H^1_{2\pi}$ close to $0$, and want to solve $\Phi_r(v)=0$ for $r\approx 0$, $v\approx 0$. There holds
\[
\Phi_r(v) = \nabla \fJ_0(Z+v) - r(\id - \De)^{-1}(\nabla F(rZ+rv))
 =:\nabla \fJ_0(Z+v)-\Psi_r(v)
\]
where $\id-\De: H^{s+2}_{2\pi} \to H^s_{2\pi}$, $w \mapsto w-\ddot{w}$, which is an isomorphism for any $s\ge0$. For a closed subspace $Y\subset H^1_{2\pi}$ let $P_Y:H^1_{2\pi}\to Y$ be the orthogonal projection. We consider $X:=\dot{Z}^\perp\subset H^1_{2\pi}$ and its direct sum decomposition $X=N_Z\oplus D$.

\begin{lem}\label{lem:reduction}
There exists $\veps >0$ and $r_\veps>0$ such that for $0<r<r_\veps$ the equation $P_X(\Phi_r(v))=0$ has a unique solution $v^{(r)}\in X$ with $\norm{v^{r}}\le\veps$. Moreover, $\norm{v^{(r)}}=O(r)$ as $r\to 0$, and $v^{(r)}$ is $C^1$ in $r$.
\end{lem}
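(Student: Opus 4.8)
The plan is to solve $P_X(\Phi_r(v))=0$ by a Lyapunov--Schmidt reduction carried out \emph{inside} $X=N_Z\oplus D$, exploiting that the two pieces live on different scales. Two structural facts drive everything. First, $\fJ_0$ is invariant under the translations $u\mapsto\hat a+u$, so $\nabla\fJ_0$ is everywhere orthogonal to $D$, i.e.\ $P_D\nabla\fJ_0\equiv0$, and every translate $Z+\hat a$ is again a critical point, so $\nabla\fJ_0(Z+\hat a)=0$ for $a$ near $0$. Second, since $a_0=0$ is a critical point of $h$ we have $\nabla h(0)=0$, and a short computation gives $\nabla_{z_l}F(0)=\Ga_l\Ga\,\nabla h(0)=0$, hence $\nabla F(0)=0$. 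Because the Taylor expansion of $\nabla F$ at $0$ therefore starts at first order, $\Psi_r(v)=r(\id-\De)^{-1}\nabla F\big(r(Z+v)\big)=O(r^2)$ uniformly for $v$ in a bounded set.

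\emph{Solving the $N_Z$-component.} Writing $v=n+d$ with $n\in N_Z$, $d\in D$, set $G(r,n,d):=P_{N_Z}\Phi_r(n+d)$. By the facts above $G(0,0,d)=P_{N_Z}\nabla\fJ_0(Z+\hat a)=0$, while $\partial_nG(0,0,0)=\fJ_0''(Z)|_{N_Z}\colon N_Z\to N_Z$ is the isomorphism furnished by the nondegeneracy of $Z$. A contraction argument (equivalently the implicit function theorem) then produces a unique small $n=n(r,d)\in N_Z$ with $P_{N_Z}\Phi_r(n(r,d)+d)=0$, depending in a $C^1$ way on $(r,d)$; since the right-hand side carries the factor $\Psi_r=O(r^2)$ and $n(0,d)=0$, one obtains $\|n(r,d)\|=O(r^2)$.

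\emph{The reduced equation on $D$.} Substituting $n(r,\hat a)$ leaves the finite-dimensional equation $\rho(r,\hat a):=P_D\Phi_r\big(n(r,\hat a)+\hat a\big)=0$ on $D\cong\R^2$, which by $P_D\nabla\fJ_0\equiv0$ is just $P_D\Psi_r=0$. Since $(\id-\De)^{-1}$ preserves the time average, for $r>0$ this is equivalent to the vanishing of
\[
\int_0^{2\pi}\sum_{k=1}^N\nabla_{z_k}F\Big(r\big(Z(t)+\hat a+n(t)\big)\Big)\,dt\in\R^2 .
\]
Here the decisive computation is the expansion
\[
\sum_{k=1}^N\nabla_{z_k}F(z)=\Ga\,\nabla^2h(0)\sum_{l=1}^N\Ga_l z_l+O(|z|^2),
\]
combined with $\sum_l\Ga_l Z_l(t)\equiv0$ (the relative equilibrium has its center of vorticity at $0$) and $\sum_l\Ga_l n_l=O(r^2)$. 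These kill the contributions of $Z$ and of $n$ at leading order, so that $\rho(r,\hat a)=-\frac{r^2\Ga^2}{N}\,\widehat{\nabla^2h(0)\,a}+O(r^3)$. Dividing by $r^2$, the rescaled map $\tilde\rho(r,a):=r^{-2}\rho(r,\hat a)$ satisfies $\tilde\rho(0,a)=-\frac{\Ga^2}{N}\widehat{\nabla^2h(0)\,a}$; since $\Ga\ne0$ and the critical point is nondegenerate, $\frac{\Ga^2}{N}\nabla^2h(0)$ is invertible, and the implicit function theorem yields a unique small $a=a(r)$ with $a(0)=0$ and $a(r)=O(r)$.

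Putting $v^{(r)}:=n\big(r,\widehat{a(r)}\big)+\widehat{a(r)}$ gives the asserted solution; local uniqueness in each step yields uniqueness of $v^{(r)}$ in $\{\|v\|\le\veps\}\cap X$, and the estimates above give $\|v^{(r)}\|=O(r)$, the size being dictated by the translation mode $\widehat{a(r)}=O(r)$ while the normal correction is $O(r^2)$. For $r>0$ the map $(r,v)\mapsto P_X\Phi_r(v)$ is jointly $C^1$ (as $\nabla F\in\cC^1$), so the implicit function theorem also gives the $C^1$-dependence of $v^{(r)}$ on $r$. The genuine obstacle is the two-dimensional kernel $D$ of $P_X\fJ_0''(Z)|_X$ coming from the broken translation invariance: the implicit function theorem cannot be applied to $P_X\Phi_r$ directly at $r=0$, and the degeneracy is resolved only after the $r^{-2}$ rescaling. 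This works precisely because $\nabla F(0)=0$ forces $\Psi_r=O(r^2)$, while the surviving $r^2$-coefficient is the invertible matrix $\frac{\Ga^2}{N}\nabla^2h(0)$; this is exactly where both hypotheses $\Ga\ne0$ and the nondegeneracy of $a_0$ enter. Keeping the rescaled equation controlled up to $r=0$ with only $g\in\cC^2$ is the remaining technical point requiring care.
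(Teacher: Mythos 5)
Your argument is correct in substance, and it reaches the lemma by a genuinely different organization than the paper. The paper works on all of $X=N_Z\oplus D$ at once: it inverts the full linearization $L_r=P_X\circ D\Phi_r(0)|_X$, whose block inverse \eqref{eq:L_r-inverse} is anisotropic (the $D$--$D$ block carries a factor $r^{-2}$), and then runs a single contraction mapping on a fixed ball $B_\veps\subset X$; the decisive inputs are $P_D\Phi_r(0)=O(r^3)$ (center of vorticity) and the invertibility of $D_0$, i.e.\ of $\Ga^2h''(0)$. You instead perform a nested reduction: you solve the $N_Z$-equation first by the implicit function theorem, uniformly in the translation parameter $\hat a$ --- exploiting, more explicitly than the paper does, that translation invariance of $\fJ_0$ makes every $Z+\hat a$ an exact critical point and forces $P_D\nabla\fJ_0\equiv0$, so the remaining $D$-equation is exactly $P_D\Psi_r=0$ --- and then you resolve the $r^2$-degeneracy of that two-dimensional equation by rescaling with $r^{-2}$ before applying the implicit function theorem a second time. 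The same structural facts power both proofs: nondegeneracy of $Z$ on $N_Z$; $\nabla h(0)=0$, whence $\Psi_r=O(r^2)$; $\sum_l\Ga_lZ_l\equiv0$ killing the leading coupling; and $\Ga\neq0$ together with $h''(0)$ invertible. The paper's route buys a directly quantitative, constructive scheme with uniqueness on the ball $B_\veps$ built in (relevant for the numerical implementation the introduction advertises); your route buys a cleaner separation of the two scales and sharper asymptotics (normal correction $n=O(r^2)$ versus translation part $\hat a=O(r)$), at the price of assembling uniqueness on $\set{\norm{v}\le\veps}$ from two local uniqueness statements (which does work, since both orthogonal projections of a small solution are small) and of checking continuity of the rescaled reduced map and of its $a$-derivative up to $r=0$ (which holds, using $n(0,\cdot)\equiv0$ and $\partial_a n(0,\cdot)=0$).

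Two further remarks. The regularity issue you flag at the end is real, but it puts you exactly on a par with the paper, not behind it: with $g\in\cC^2$ only, $F''$ is merely continuous, so your error terms are $o(r^2)$ rather than $O(r^3)$; but the paper's estimates \eqref{eq:P_XPhi=O(r)} and \eqref{eq:L_r-inverse=O(rv)} tacitly use a Lipschitz bound on $F''$ in the same way (e.g.\ ``$F''(rZ+rv)-F''(rZ)=O(r)O(\norm{v})$''). With mere continuity of $F''$, both arguments still give existence, uniqueness and $C^1$-dependence on $r$, only with $\norm{v^{(r)}}=o(1)$ in place of $O(r)$ --- which is all that the proof of the theorem actually uses. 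Finally, your leading coefficient is the correct one: $\sum_k\nabla_{z_k}F(z)=\Ga\,h''(0)\bigl[\sum_l\Ga_lz_l\bigr]+o(\abs{z})$, giving $\frac{\Ga^2}{N}\,h''(0)$ after projection onto $D$; the paper's formula \eqref{eq:Fsecond} is off by a factor $2$ (test $N=1$, where $F=\Ga_1^2h$), a slip that is immaterial there because only the invertibility of the block $D_0$ enters.
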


\begin{proof}
We define the linear operator $L_r:X\to X$ by
\[
\bal
L_r[v] &= P_X\circ D\Phi_r(0)[v] = \fJ''_0(Z)[v]-D\Psi_r(0)[v]\\
{}&= \fJ''_0 (Z)[v]-r^2(id-\De)^{-1}\big(F''(rZ)[v]\big).
\eal
\]
Recall that $\fJ''_0(Z)$ defines an isomorphism $N_Z\to N_Z$. With respect to the decomposition $X=N_Z\oplus D$ we have the block decomposition
\[
L_r=\begin{pmatrix} A_r & -r^2B_r \\ -r^2C_r & -r^2D_r \end{pmatrix}
\]
i.e.\ for $v\in X$ with $\hat{v}:=P_{N_Z}[v]$, $\hat{a}:=P_D[v]$ there holds
$L_r[v] = A_r[\hat{v}]-r^2 B_r[\hat{a}]-r^2C_r[\hat{v}]-r^2 D_r[\hat{a}]$. We also have
\[
\bal
&A_r[\hat{v}] = \fJ''_0(Z)[\hat{v}]-r^2P_{N_Z}D\Psi_r(0)[u]
 \stackrel{r\to 0}{\longrightarrow} \fJ''_0(Z)[\hat{v}],\\
&B_r[\hat{a}] = P_{N_Z}(\id-\De)^{-1}\big(F''(rZ)[\hat{a}]\big)
 \stackrel{r\to 0}{\longrightarrow} P_{N_Z} (\id-\De)^{-1}\big(F''(0)[\hat{a}]\big) =: B_0[\hat{a}],\\
&C_r[\hat{u}] = P_D(\id-\De)^{-1} \big(F''(rZ)[\hat{v}]\big)
 \stackrel{r\to 0}{\longrightarrow} P_D(\id-\De)^{-1}\big(F''(0)[\hat{u}]\big) = :C_0[\hat{v}],\\
&D_r[\hat{a}] = P_D(\id-\De)^{-1}\big(F''(rZ)[\hat{a}]\big)
 \stackrel{r\to 0}{\longrightarrow} P_D (\id-\De)^{-1}\big(F''(0)[\hat{a}]\big) =: D_0[\hat{a}].
\eal
\]

We claim that $D_0:D\to D$ is an isomorphism. First observe that
$D_0[\hat{a}] = P_D\big(F''(0)[\hat{a}]\big)$. Using the fact that $g(w,z)=g(z,w)$ is symmetric, a straightforward computation yields
\beql{eq:Fsecond}
F''(0)=\frac12(\Ga_j \Ga_k)_{j,k=1,\ldots, N}\otimes h''(0)
\eeq
i.e.\ for $c=(c_1, \ldots, c_N)\in\R^{2N}$ the $j$-th compact of $F''(0)[c]$ is given by
\[
\big(F''(0)[c]\big)_j = \frac12\sum_{k=1}^N \Ga_j\Ga_k h''(0)[c_k].
\]
In particular, for $a\in\R^2$ and $c=\hat{a}$ we obtain
\[
\big(F''(0)[\hat{a}]\big)_j = \frac12\Ga_j\Ga h''(0)[a].
\]
In order to compute $P_D\big(F''(0)[\hat{a}]\big)$ let $e_1, e_2\in\R^2$ be the standard basis. Then:
\[
\langle F''(0)[\hat{a}],\hat{e}_i\rangle_{H^1_{2\pi}}
 = 2\pi\cdot\frac12\sum_{j=1}^N\Ga_j\Ga \langle h''(0)[a], e_i\rangle_{\R^2}
 = \pi\Ga^2 \langle h''(0)[a], e_i\rangle_{\R^2}\\
\]
and therefore, using $\norm{e_i}^2_{H^1}=2\pi N$,
$$
P_D(F''(0)[\hat{a}]) = \frac{\Ga^2}{2N} \widehat{h''(0)[{a}]}.
$$
Since $h''(0)\in\R^{2\times 2}$ is invertible and $\Ga\ne0$, $D_0$ is invertible.

It follows that $D_r$ is invertible for $r\geq 0$ small. Moreover, $A_r$ is invertible for $r\geq 0$ small because $A_r\to A_0=J''(Z)|_{N_Z}$. Consequently $L_r$ is invertible for $r>0$ small and
\beql{eq:L_r-inverse}
L_r^{-1}
 = \begin{pmatrix} A_r^{-1}+r^2O(1) & -A^{-1}_0 B_0 D^{-1}_0+o(1) \\
         -D_0^{-1} C_0 A_0+o(1) & \frac{1}{r^2} (D_0^{-1}+o(1))
   \end{pmatrix}
\eeq
with respect to the decomposition $X=N_Z\oplus D$.

Writing
\beql{eq:P_XPhi}
P_X\Phi_r(v)=P_X\Phi_r(0)+L_r[v]-\varphi_r(v),
\eeq
the equation $P_X\Phi_r(v)=0$ is equivalent to
\beql{eq:fixedpoint}
v = -L_r^{-1}[P_X\Phi_r(0)]+L_r^{-1}[\varphi_r(v)] =: K_r(v).
\eeq
We claim that
\beql{eq:P_XPhi=O(r)}
L^{-1}_r[P_X\Phi_r(0)]=O(r)\text{ as }r\to 0.
\eeq
From
\[
\Phi_r(0) = \Phi _0(0)-\psi_r(0) = -r(\id-\De)^{-1}\big(\nabla F(rZ)\big) = O(r)
\]
and \eqref{eq:L_r-inverse} we deduce
\beql{eq:P_Nz=O(r)}
P_{N_Z}\big(L_r^{-1}\Phi_r(0)\big) = O(r).
\eeq
Moreover,
\[
\frac{1}{r^2}\Phi_r(0)
 = -(\id-\De)^{-1}\left(\frac1r\nabla F(rZ)\right) = -(\id-\De)^{-1}\big(F''(0)[Z]\big)+O(r)
\]
as $r\to 0$ because $\nabla F(0)=0$ as a consequence of $\nabla h(0)=0$. For $c\in\R^N$ we have
\[
\langle (\id-\De)^{-1}(F''(0)[Z],c\rangle_{H^1} = \langle F''(0)[Z],c\rangle_{L^2}
 = \sum_{j=1}^N\langle(F''(0)[Z])_j,c_j\rangle_{L^2}
\]
and $(F''((0)[Z])_j=\frac12\Ga_j\sum_{k=1}^N\Ga_kh''(0)[Z_k]$
by \eqref{eq:Fsecond}. Finally, using that the center of vorticity of $Z$ is $0$ we obtain
\[
\int_0^{2\pi}\sum_{k=1}^N\Ga_k\langle h''(0)[Z_k],c_j\rangle_{\R^2}
 = \int_0^{2\pi}\left\langle\sum_{k=1}^N\Ga_k Z_k,h''(0)c_j\right\rangle_{\R^2}=0.
\]
Consequently, $P_D\big[(\id-\De)^{-1}(F''(0)[Z])\big]=0$ which implies that
\beql{eq:P_DPhi=O(r^3)}
\frac{1}{r^z}P_D\Phi_r(0)=O(r)\quad\text{for } r\to 0.
\eeq
Now \eqref{eq:P_XPhi=O(r)} follows from \eqref{eq:L_r-inverse}, \eqref{eq:P_Nz=O(r)}, and \eqref{eq:P_DPhi=O(r^3)}.

Next we claim that
\beql{eq:L_r-inverse=O(rv)}
L_r^{-1}D\varphi_r(v) = O(r) O(\norm{v})\quad\text{for }r\to 0, v\to 0.
\eeq
In order to see this observe that
\[
D\varphi_r(v) = -\De\Psi_r(v)-D\Psi_r(0) = r^2(\id-\De)^{-1}\big(F''(rZ+rv)-F''(rZ)\big).
\]
Since $F''(rZ+rv)-F''(rZ)=O(r)O(\norm{v})$ as $r\to 0$, $v\to 0$, we obtain $\frac{1}{r^2}D\varphi_r(v)=O(r)O(\norm{v})$ as $r\to 0$, $v\to 0$. Using \eqref{eq:L_r-inverse} this yields \eqref{eq:L_r-inverse=O(rv)}.

Now we can solve \eqref{eq:fixedpoint} by the contraction mapping principle. Due to \eqref{eq:L_r-inverse=O(rv)} there exists $\rho>0$ and $\veps>0$ such that
$\norm{L_r^{-1} D\varphi_r(v)}\le\frac12$ for $0<r\le\rho$, $\norm{v}\le\veps$. By \eqref{eq:P_XPhi=O(r)} there exists $r_\veps>0$ such that $\norm{L_r^{-1}P_X\Phi_r(0)}\le\frac{\veps}{2}$ for $0<r<r_\veps$. Then the map $K_r$ from \eqref{eq:fixedpoint} satisfies:
\[
\norm{K_r(v)-K_r(w)} = \norm{L_r^{-1}(\varphi_r(v)-\varphi_r(w))} \le \frac12\norm{v-w}
\]
for all $v,w\in X$ with $\norm{v},\norm{w}\le\veps$. In addition,
\[
\norm{K_r(v)}
 \leq \norm{K_r(v)-K_r(0)}+\norm{K_r(0)} \le \frac12\norm{v}+\frac{\veps}{2} \le \veps
\]
if $\norm{v}\le\veps$. Therefore $K_r$ induces a contraction on
$B_\veps=\{v\in X:\norm{v}\leq\veps\}$, hence there exists a unique fixed point
$v^{(r)}\in B_\veps$ for $K_r$, $0<r<r_\veps$. That $v^{(r)}$ is $C^1$ in $r$ follows from the implicit function theorem because $\Phi_r(v)$ is $C^1$ in $r$ and $v$.

It remains to prove that $\norm{v^{(r)}}=O(r)$ as $r\to0$. This follows from \eqref{eq:P_XPhi=O(r)} and \eqref{eq:L_r-inverse=O(rv)}:
\[
\norm{v^{(r)}} = \norm{K_r(v^{(r)})}
 \le \norm{L_r^{-1}[P_X\Phi_r(0)]}+\norm{L_r^{-1}[\varphi_r(v)^{(r)}]} = O(r)
\]
\end{proof}

\begin{lem}\label{lem:equivariance}
For $r\in(0,r_\veps)$ the map $S^1\to H^1_{2\pi}$, $\theta \mapsto\theta*v^{(r)}$, is of class $C^1$, and
$\langle\dot{v}^{(r)},\dot{Z}\rangle = \frac{d}{d\theta}\langle\theta*v^{(r)},\dot{Z}\rangle
 = O(r)$.
\end{lem}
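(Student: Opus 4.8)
The plan is to reduce both assertions to a single regularity fact: that the reduced solution $v^{(r)}$ from Lemma \ref{lem:reduction} actually lies in $H^2_{2\pi}$, not merely in $H^1_{2\pi}$. Granting this, the first assertion is soft. For $w\in H^2_{2\pi}$ the translation map $\theta\mapsto w(\cdot+\theta)$ is of class $C^1$ from $S^1$ into $H^1_{2\pi}$ with derivative $\theta\mapsto\dot w(\cdot+\theta)$, because $\dot w\in H^1_{2\pi}$ and translation acts continuously on $H^1_{2\pi}$. Applying this to $w=v^{(r)}$ and pairing with the fixed vector $\dot Z$ shows that $\theta\mapsto\langle\theta*v^{(r)},\dot Z\rangle$ is $C^1$ and that its derivative at $\theta=0$ is $\langle\dot v^{(r)},\dot Z\rangle$, which is the first equality in the statement.

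The main step, and the one I expect to be the real obstacle, is the regularity bootstrap, since it must combine the smoothing of $(\id-\De)^{-1}$ with the fact that the orbit stays collision-free so that $\nabla H_0$ remains regular along it. By Lemma \ref{lem:reduction} we have $P_X\Phi_r(v^{(r)})=0$; since $X=\dot Z^\perp$ has orthogonal complement $\R\dot Z$, this forces $\Phi_r(v^{(r)})=\mu_r\dot Z$ for some $\mu_r\in\R$, i.e. $\nabla\fJ_0(Z+v^{(r)})=\Psi_r(v^{(r)})+\mu_r\dot Z$. Recalling that the $L^2$-gradient of $\fJ_0$ is $u\mapsto-M_\Ga J_N\dot u-\nabla H_0(u)$ (so that $\nabla\fJ_0(u)=0$ is exactly $(HS_0)$) and that $\Psi_r(v)=r(\id-\De)^{-1}\nabla F(rZ+rv)$, I would apply $\id-\De$ to this identity to obtain the pointwise equation
\[
M_\Ga J_N\dot v^{(r)}=-M_\Ga J_N\dot Z-\nabla H_0(Z+v^{(r)})-r\nabla F(rZ+rv^{(r)})-\mu_r(\id-\De)\dot Z .
\]
Here $M_\Ga J_N=\mathrm{diag}(\Ga_1,\dots,\Ga_N)\otimes J$ is invertible because all $\Ga_k\ne0$. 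Since $v^{(r)}\in H^1_{2\pi}\hookrightarrow C^0$ keeps $Z+v^{(r)}$ collision-free, $\nabla H_0(Z+v^{(r)})$ is continuous, and $\nabla F(rZ+rv^{(r)})$ is continuous because $g\in\cC^2$; as $Z$ is smooth, the whole right-hand side is continuous, so $\dot v^{(r)}\in C^0$ and $v^{(r)}\in C^1_{2\pi}$. Feeding this back in, the right-hand side becomes $C^1$ (using $\nabla H_0\in\cC^\infty$ off the collision set and $\nabla F\in\cC^1$), whence $v^{(r)}\in C^2_{2\pi}\subset H^2_{2\pi}$.

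With $v^{(r)}\in H^2_{2\pi}$ the final estimate is then routine. I would expand the $H^1_{2\pi}$ inner product and integrate by parts, pushing every derivative onto the smooth function $Z$ and using $2\pi$-periodicity to kill boundary terms, obtaining
\[
\langle\dot v^{(r)},\dot Z\rangle=\int_0^{2\pi}\dot v^{(r)}\cdot\dot Z\,dt+\int_0^{2\pi}\ddot v^{(r)}\cdot\ddot Z\,dt=-\int_0^{2\pi}v^{(r)}\cdot\ddot Z\,dt-\int_0^{2\pi}\dot v^{(r)}\cdot\De\dot Z\,dt .
\]
By Cauchy--Schwarz the two integrals are bounded by $\norm{v^{(r)}}_{L^2}\norm{\ddot Z}_{L^2}$ and $\norm{\dot v^{(r)}}_{L^2}\norm{\De\dot Z}_{L^2}$ respectively, each of which is at most a constant depending only on $Z$ times $\norm{v^{(r)}}_{H^1}$. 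Since $\norm{v^{(r)}}_{H^1}=O(r)$ by Lemma \ref{lem:reduction}, this yields $\langle\dot v^{(r)},\dot Z\rangle=O(r)$, completing the proof. I note that the same bound holds verbatim if $\langle\,\cdot\,,\cdot\,\rangle$ is read as the $L^2$ pairing, so the statement is insensitive to that choice.
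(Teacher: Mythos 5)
Your proof is correct, but it follows a genuinely different route from the paper's. The paper gets the $C^1$-dependence on $\theta$ abstractly: by $S^1$-equivariance of $\nabla\fJ_r$ and the uniqueness in Lemma~\ref{lem:reduction}, $\theta*v^{(r)}$ is the unique small solution of the reduced problem based at $\theta*Z$, and it is realized as the zero of the $C^1$ map $f_r(\theta,v)=P_{X_\theta}[\nabla\fJ_r(\theta*Z+v)]+\norm{\dot Z}^{-2}\langle\theta*\dot Z,v\rangle\,\theta*\dot Z$, whose partial derivative $D_vf_r$ is invertible; the implicit function theorem then gives differentiability in $\theta$. The $O(r)$ estimate is obtained there by differentiating the invariant orthogonality identity $\langle\theta*v^{(r)},\theta*\dot Z\rangle=0$, which yields $\langle\dot v^{(r)},\dot Z\rangle=-\langle v^{(r)},\ddot Z\rangle=O(\norm{v^{(r)}})=O(r)$ in one line. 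You instead prove a regularity statement: from $P_X\Phi_r(v^{(r)})=0$ you correctly extract the Lagrange multiplier equation $\Phi_r(v^{(r)})=\mu_r\dot Z$ (since $X^\perp=\R\dot Z$), apply $\id-\De$ to convert it into a pointwise first-order ODE, and bootstrap --- using invertibility of $M_\Ga J_N$, the fact that $Z+v^{(r)}$ stays collision-free, and $g\in\cC^2$ --- to get $v^{(r)}\in C^2\subset H^2_{2\pi}$; the $C^1$-property of the orbit map then follows from the semigroup fact that translation is differentiable on the generator's domain, and the estimate follows by integration by parts, Cauchy--Schwarz, and $\norm{v^{(r)}}=O(r)$. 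Both arguments are complete and use only Lemma~\ref{lem:reduction} as input. What each buys: your bootstrap yields strictly more information ($v^{(r)}$ is a classical solution, and the derivative of the orbit map is identified as $\theta*\dot v^{(r)}$, which in particular makes $\langle\dot v^{(r)},\dot Z\rangle$ manifestly well-defined as an $H^1$ pairing), whereas the paper stays entirely within the $H^1$ Lyapunov--Schmidt framework, avoids regularity theory, and exploits equivariance in a way that also feeds the uniqueness assertion of Theorem~\ref{thm:existence}~b). One small point in your write-up deserves an explicit line: the ``soft'' claim that $\theta\mapsto\theta*w$ is $C^1$ into $H^1_{2\pi}$ for $w\in H^2_{2\pi}$ does not follow from strong continuity of translations alone; it needs the Bochner-integral identity $\theta*w-w=\int_0^\theta s*\dot w\,ds$ in $H^1_{2\pi}$ (or an equivalent Fourier argument), after which strong continuity applied to $\dot w\in H^1_{2\pi}$ finishes the job.
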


Observe that for an arbitrary element $u\in H^1_{2\pi}$ the map $S^1\to H^1_{2\pi}$, $\theta\mapsto\theta*u$, may not be differentiable.

\begin{proof}
Since $\nabla\fJ_r$ is $S^1$-equivariant, the unique solution
$v\in X_\theta :=(\theta*\dot{Z})^\perp$ of
\beql{eq:theta}
P_{X_\theta}[\nabla\fJ_r(\theta*Z+v)]=0, \ \ \langle v,\theta*\dot{Z} \rangle=0, \ \ \norm{v}\le\veps,
\eeq
is $v=\theta*v^{(r)}$. This uses Lemma \ref{lem:reduction} with $Z$ replaced by $\theta*Z$. The $S^1$-invariance of $\fJ_r$ implies that $\veps$ and $r_\veps$ are independent of
$\theta\in S^1$. Consider the map $f_r:S^1\times H^1_{2\pi}\to H^1_{2\pi}$, defined by
\[
f_r(\theta,v) := P_{X_\theta}[\nabla\fJ_r(\theta*Z+v)]
  + \frac{\langle\theta*\dot{Z},v\rangle}{\|\dot{Z}\|^2}\theta*\dot{Z}.
\]
This is of class $C^1$ and
\[
D_v f_r(0,v) = P_X\circ D\Phi_r(v)+P_{\R\dot{Z}}: H^1_{2\pi} \to H^1_{2\pi}.
\]
Now \eqref{eq:P_XPhi} and \eqref{eq:L_r-inverse=O(rv)} imply that
$P_X\circ D\Phi_r(v)|_X:X\to X$ is invertible for $0<r<r_\veps$, $\norm{v}\leq \veps$, hence $D_v f_r(0,v)$ is invertible for $0<r<r_\veps$, $\norm{v}\le\veps$. It follows that the solution $\theta*v^{(r)}$ of $f_r(\theta,v)=0$ is $C^1$ in $\theta$.

Finally, $\langle\dot{v}^{(r)},\dot{Z}\rangle = O(r)$ as $r\to 0$ follows by differentiating the identity $\langle\theta*v^{(r)},\theta*\dot{Z}\rangle =0$ and using Lemma~\ref{lem:reduction}:
\[
0 = {\frac{d}{d\theta}}\bigg|_{\theta=0}\langle\theta*v^{(r)},\theta*\dot{Z}\rangle
  = \langle\dot{v}^{(r)},\dot{Z}\rangle+\langle v^{(r)},\ddot{Z}\rangle
  = \langle\dot{v}^{(r)},\dot{Z}\rangle+O(r)
\]
\end{proof}

Now we easily conclude the

\begin{altproof}{Theorem~\ref{thm:existence}}
We differentiate the identity
\[
\fJ_r(\theta*Z+\theta*v^{(r)}) = \fJ_r(Z+v^{(r)})
\]
and obtain:
\[
\bal
0&= \frac{d}{d\theta}\bigg|_{\theta=0} \fJ_r(\theta*Z+\theta*v^{(r)})
  = \langle\nabla \fJ_r(Z+v^{(r)}),\dot{Z}+\dot{v}^{(r)}\rangle\\
{}&= \langle\nabla \fJ_r(Z+v^{(r)}),\dot{Z}\rangle
     + \frac{\langle\dot{v}^{(r)},\dot{Z}\rangle}{\norm{\dot{Z}}^2}
        \cdot \langle\nabla \fJ_r(Z+v^{(r)}),\dot{Z} \rangle.
\eal
\]
The last equality uses $P_X\nabla \fJ_r(Z+v^{(r)})=0$. Now $\langle\dot{v}^{(r)},\dot{Z}\rangle=O(r)$ by Lemma~\ref{lem:equivariance}, hence
\[
0 = \langle\nabla \fJ_r(Z+v^{(r)}),(1+O(r))\dot{Z}\rangle.
\]
This implies $\langle\nabla\fJ_r(Z+v^{(r)}),\dot{Z}\rangle=0$, hence $\nabla\fJ_r(Z+v^{(r)})=0$, for $0<r<r_0$ small. This proves \ref{thm:existence}~a) with $u^{(r)}:=Z+v^{(r)}$.

The $S^1$-invariant neigborhood $\cU\subset H^1_{2\pi}$ of $S^1*Z$ in \ref{thm:existence}~b) is simply the $\veps$-neighborhood of $S^1*Z$ with $\veps$ from Lemma~\ref{lem:reduction}.
\end{altproof}

\end{document}